\documentclass{amsart}
\usepackage{amssymb, latexsym}

\newtheorem{theorem}{Theorem}
\newtheorem{lemma}{Lemma}
\newtheorem{remark}{Remark}
\newtheorem{definition}{Definition}

\begin{document}
\title{Two Algorithms in Group Theory}
\author{Rita Gitik}
\address{ Department of Mathematics \\ University of Michigan \\ Ann Arbor, MI, 48109}
\email{ritagtk@umich.edu}
\date{\today}

\begin{abstract}
We present a new algorithm deciding if the intersection of a quasiconvex subgroup of a negatively curved group with a conjugate is finite. We also give a short proof of decidability of the membership problem for quasiconvex subgroups of finitely generated groups with decidable word problem.
\end{abstract}

\subjclass[2010]{Primary: 20F10; Secondary: 20F65, 20F67}

\maketitle

\textbf{Keywords:} Group, Algorithm, Conjugate, Membership Problem.

\section{Introduction}

Let $H$ be a subgroup of a group $G$ and let $g$ be an element of $G$.
The subgroup $g^{-1} H g$ is called the conjugate of $H$ by $g$.
We would like to check if the intersection $H \cap g^{-1} H g$ is finite.
This is a very old problem, closely connected to the study of the behavior of different lifts
of subspaces of topological spaces in  covering spaces.

However in general, the question if a subgroup of a group is finite 
is undecidable. That was shown by Adian in 1957, \cite{Ad1}, and independently, by Rabin in 1958, \cite{Ra}.
The modern statement of Adian-Rabin theorem utilizes a notion of a Markov property for finitely presented groups.

\begin{definition}

A property $M$ of finitely presented groups is called Markov if it is preserved under group isomorphisms
and the following holds:

\begin{enumerate}
\item
There exists a finitely presented group $K$ with property $M$.

\item
There exists a finitely presented group $H$ which cannot be embedded in any finitely presented group with property $M$.
\end{enumerate}

\end{definition}

The Adian-Rabin theorem can be stated as follows.

\begin{theorem}
Let $M$ be a Markov property of finitely presented groups and let $G$ be a finitely presented group.
It is undecidable whether or not $G$ has property $M$.
\end{theorem}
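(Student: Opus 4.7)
The plan is to reduce the problem to the word problem for finitely presented groups, which is undecidable by the Novikov--Boone theorem. I would fix once and for all a finitely presented group $G_0 = \langle X_0 \mid R_0 \rangle$ with unsolvable word problem. The goal is then to produce, for each word $w$ in the generators $X_0$, a finite presentation of a group $G(w)$ by a uniformly computable procedure, such that $G(w)$ has property $M$ if and only if $w =_{G_0} 1$. A hypothetical decision algorithm for $M$ applied to the presentation of $G(w)$ would then decide the word problem in $G_0$, yielding the desired contradiction.

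The construction of $G(w)$ is the heart of the argument and follows the classical Rabin embedding scheme. Starting from the free product $G_0 * H * K$, where $H$ and $K$ are the two witnesses guaranteed by the Markov hypothesis, I would perform a finite, effectively described sequence of HNN extensions and amalgamated free products whose stable letters and associated subgroups depend explicitly on $w$. The sequence is designed so that, when $w =_{G_0} 1$, the added relations collapse the $H$-factor and leave a group that (perhaps after a final amalgamation over $K$) is isomorphic to a group with property $M$; isomorphism-invariance of $M$ then gives $M(G(w))$. Conversely, when $w \neq_{G_0} 1$, Britton's lemma ensures each HNN extension is proper, so $H$ embeds in $G(w)$; by condition (2) of the Markov definition, $G(w)$ then cannot have property $M$.

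Once the dichotomy $w =_{G_0} 1 \iff M(G(w))$ is established, the theorem follows immediately: if some algorithm $\mathcal{A}$ decided $M$ on finite presentations, then $w \mapsto \mathcal{A}(G(w))$ would decide the word problem of $G_0$.

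The main obstacle is the bookkeeping of the Rabin-style construction: one must design the stable letters and associated subgroups so that triviality of $w$ in $G_0$ forces exactly the collapse needed to identify the quotient with something in the $M$-class (using the witness $K$), while non-triviality of $w$ leaves enough normal-form structure for Britton's lemma to preserve $H$. This equivalence, together with verifying that every step of the construction is effective from the input $w$, is the technical core; the reduction to the word problem and the appeal to isomorphism invariance of $M$ are then routine.
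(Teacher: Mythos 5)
You should first note that the paper does not prove this theorem at all: it is stated as a classical result and attributed to Adian \cite{Ad1} and Rabin \cite{Ra}, so there is no in-paper argument to compare against. Your outline is the correct and standard strategy for the Adian--Rabin theorem --- reduce to the Novikov--Boone unsolvability of the word problem by building, uniformly in a word $w$, a presentation $G(w)$ with $G(w)$ having $M$ if and only if $w=1$ in a fixed group $G_0$ with unsolvable word problem, using the witness $K$ for the positive direction and the non-embeddable witness $H$ together with Britton's lemma for the negative direction.

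That said, as submitted the proposal is an outline rather than a proof: the entire mathematical content of the theorem lives in the step you defer, namely the explicit Rabin construction of $G(w)$ and the normal-form verification that $H$ survives when $w \neq 1$. Without specifying the stable letters, the associated subgroups, and the Britton's-lemma argument, nothing has actually been established. Two smaller points also need care. First, the standard construction arranges that $G_w$ collapses to the \emph{trivial} group when $w=1$ (not merely that ``the $H$-factor collapses''), and only then does one form the free product $G_w * K$ --- a free product, not an amalgamation over $K$ --- so that the result is isomorphic to $K$ itself; if anything nontrivial survived the collapse, isomorphism-invariance of $M$ would give you nothing, since $M$ need not be preserved under free products with $K$. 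Second, condition (2) of the Markov definition only forbids embedding $H$ into \emph{finitely presented} groups with property $M$, so you must check that every group in your chain, including the final $G(w)$, is finitely presented and that the presentation is computable from $w$; this is routine for finitely many HNN extensions over finitely generated associated subgroups, but it should be said. With the construction actually written out and these points attended to, the argument is the classical one and is correct.
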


Note that being a finite group is a Markov property. The group $K$ can be chosen to be the trivial group and the group $H$ can be chosen to be infinite cyclic group.

\bigskip

We would like to mention several families of subgroups which have well-understood intersections with their conjugates.

A subgroup $H$ is normal in $G$ if $H = g^{-1} H g$ for any $g \in G$. 
The study of normal subgroups goes back to the origins of group theory. 
The concept was introduced by \'{E}variste Galois at the beginning of the 19th century, 
who called  normal subgroups "invariant subgroups", \cite{Ga}.
G. Baumslag, Boone, and B. Newmann showed in 1959 that
being normal is an undecidable property for a subgroup, \cite{B-B-N}.

A subgroup $H$ is malnormal in $G$ if for any $g \in G$ such that $g \notin H$
the intersection $H \cap g^{-1} H g$ is trivial. This concept for infinite groups was introduced
by B. Baumslag in 1968, \cite{Ba}.
However, malnormality was investigated in finite groups at the end of the
19th century by Ferdinand Georg Frobenius, \cite{Fr}.
A proper nontrivial malnormal subgroup of a finite group is called  a Frobenius complement or a Frobenius subgroup.
Bridson and Wise showed in 2001 
that malnormality of a finitely generated subgroup in a negatively curved group is undecidable, \cite{B-W}, however
the author proved in 2016, \cite{Gi3}, that malnormality is decidable for a torsion-free quasiconvex subgroup of a 
negatively curved group.

Malnormality of a subgroup has been generalized in different ways. One of them, namely the height, introduced  by the author in 1995, \cite{G-M-R-S},
has been used by Agol in 2013 in his proof of  Thurston's conjecture that hyperbolic $3$-manifolds are
virtual bundles (over a circle with fiber a surface), \cite{Th}, \cite{Ag}, and \cite{A-G-M}.

A subgroup $H$ of $G$ is almost malnormal in $G$ if for any $g \in G$ such that $g \notin H$ the intersection $H \cap g^{-1}Hg$ is finite.
Using a result of Rips from 1982, \cite{Ri}, Bridson and Wise showed in 2001, \cite{B-W}, that almost malnormality of a finitely generated subgroup of a negatively curved group is undecidable. However, the author showed in 2016, \cite{Gi3}, that almost malnormality is decidable for quasiconvex subgroups of negatively curved groups. An informative paper on malnormality and almost malnormality was published by de la Harpe and Weber in 2014, \cite{H-W}.

Most subgroups are neither normal nor malnormal, so the study of the intersection pattern
of conjugates of a subgroup is a challenging problem.
We restrict ourselves to the special case of $H$ being quasiconvex and $G$ being negatively curved.

\section{Notation and Definitions}

Let $X$  be a set and let $X^* = \{x,x^{-1} |x \in X \}$, where for $x \in X$ we define $(x^{-1})^{-1} =x$.
Let $G$ be a group generated by the set $X$.
As usual, we identify a word in $X^*$ with the corresponding element in $G$. 

Let $Cayley(G)$ be the Cayley graph of $G$ with respect to the generating set $X$. 
The set of vertices of $Cayley(G)$ is $G$,  the set of edges of $Cayley(G)$ is $G \times X^*$, 
and the edge $(g,x)$ joins the vertex $g$ to $gx$. The Cayley graph was first considered by Cayley in 1878, \cite{Ca}.

A path  $p$ in $Cayley(G)$ is a sequence of edges of the following form:
$p=(g,x_1)(gx_1,x_2)  \cdots (gx_1x_2 \cdots x_{n-1},x_n)$. 
The length of the path $p$ is the number of edges forming it. 
A geodesic between two vertices in $Cayley(G)$ is a shortest path in $Cayley(G)$ connecting these vertices.

A group $G$ is $\delta$-negatively curved if any side of any geodesic triangle in $Cayley(G)$
belongs to the $\delta$-neighborhood of the union of the two other sides.
Negatively curved groups were introduced by Gromov in 1987, \cite{Gro}. Negatively curved groups are also called word hyperbolic
and Gromov hyperbolic groups.

Note that $Cayley(G)$ can be effectively constructed if and only if the word problem in $G$ is decidable. The word problem asks if there exists an algorithm to decide if any word in the alphabet $X^*$ represents the trivial element of $G= \langle X|R \rangle$. The word problem was introduced by Dehn in 1911, \cite{De}. It was shown  by Novikov in 1955, \cite{No}, and independently, by Boone in 1958, \cite{Bo},
that the word problem in groups in undecidable. However, it follows from the work of Greendlinger in 1960, \cite{Gre}, that the word problem is decidable in negatively curved groups.

A subgroup $H$ of $G$ is $K$-quasiconvex in $G$ if any geodesic in $Cayley(G)$ with the endpoints at $H$
belongs to the $K$-neighborhood of $H$. Quasiconvex subgroups were introduced by Gromov in 1987, \cite{Gro}.

\begin{remark}
G. Baumslag, C. F. Miller III, and Short showed in 1992 that given a finite presentation $<X|R>$ for a group $G$, it is undecidable 
if $G$ is negatively curved, \cite{B-M-S}. However, if $G$ is known to be negatively curved, a negative
curvature constant $\delta$ can be determined.
That was demonstrated by Epstein and Holt in 2000, \cite{E-H}, and independently, by Papasoglu in 1996, \cite{Pa}.

Bridson and Wise showed in 2001 that the property of being quasiconvex is undecidable for a finitely generated subgroup
of a torsion-free negatively curved group, \cite{B-W}. However, I. Kapovich showed  in 1996 that if a subgroup of a negatively curved group
is known to be quasiconvex, then a quasiconvexity constant $K$ for such a subgroup can be computed, \cite{Ka}.
\end{remark}

\section{An Algorithm Deciding If the Intersection of a Quasiconvex Subgroup of a Negatively Curved Group with a Conjugate Is Finite}

Input: a finite presentation $<X|R>$ for a negatively curved group $G$, a finite generating set for a quasiconvex subgroup $H$ of  $G$,
and an element $g \in G$ which is not in $H$.

Output: a finite group isomorphic to $H \cap g^{-1} H g$ or a statement that the intersection is infinite.

\begin{enumerate}
\item Find a constant $\delta$ (not necessarily minimal) of negative curvature of $G$.
This can be done using the results of  Epstein and Holt, \cite{E-H}, or of Papasoglu, \cite{Pa}.

\item
N. Brady in 2000, \cite{Br}, showed that the orders of finite subgroups of a $\delta$-negatively curved group $G$ generated by a finite set $X$ 
is bounded by a constant $C =(2|X|)^{2 \delta +1} +1$.  A similar result was obtained  independently by Bogopolskii and Gerasimov in 1996, \cite{B-G}. 

Make a list $L$ of all finite groups with fewer than $C$ elements.
This can be done, for example, by considering the multiplication tables of group elements.
Note that all finite groups are negatively curved because their Cayley graphs have finite diameters.

\item The author showed in 1996, \cite{Gi1}, that  conjugation, in general, does not preserve quasiconvexity. However,
the author proved in 1997, \cite{Gi3}, that a conjugate of a quasiconvex subgroup of a negatively curved group is quasiconvex. It follows that 
$g^{-1} H g$ is a quasiconvex subgroup of $G$. Gromov proved in 1987, \cite{Gro}, that the intersection of two quasiconvex subgroups of a negatively curved
group is quasiconvex. Hence, the subgroup $H \cap g^{-1} H g$ is quasiconvex in $G$. Bridson and Haefliger proved in 1999 that a quasiconvex subgroup of a negatively curved group is negatively curved, \cite{B-H}, p.462.  Therefore the group $H \cap g^{-1} H g$ is negatively curved.

The isomorphism problem in groups asks if there exists an algorithm to decide if any two presentations define isomorphic groups. The isomorphism problem was introduced by Dehn is 1911, \cite{De}. It was shown by Adian in 1957, \cite{Ad2}, and independently by Rabin in 1958, \cite{Ra}, that the isomorphism
problem in groups is undecidable. However, Dahmani and Guirardel showed in 2011 that the isomorphism problem for negatively curved groups is decidable, \cite{D-G}. Therefore  we can determine whether  $H \cap g^{-1} H g$ is finite by checking if it is isomorphic to an element of $L$.

If positive, output the intersection. 

If negative, output the statement that the intersection is infinite.
\end{enumerate}

\section{The Membership Problem}

We will need additional notation. Denote the equality of two words in $X^*$ by $\equiv$.
The length of the word $w$ is the number of symbols from $X^*$ forming $w$. Denote the length of the word $w$ by $|w|$.

The membership problem for a subgroup $H$ of $G= \langle X|R \rangle$ asks if there exists an algorithm which for any word $w$ in the alphabet $X^*$ decides whether or not $w$ represents an element of $H$. The membership problem is also called the generalised word problem. 
If $G$ has a decidable membership problem for the trivial subgroup, then $G$ has a decidable word problem. As the word problem, in general, is undecidable, \cite{No} and \cite{Bo}, the membership problem, in general, is undecidable.

The solution of the word problem in negatively curved groups follows from the result proved by Greendlinger in  1960, \cite{Gre} for certain small cancellations groups. A good exposition of this result was given by Lyndon and Schupp in 1977, \cite{L-S}, p.249. That result was generalized to negatively curved groups by Gromov in \cite{Gro}.  

\begin{theorem}
Let $G$ be a negatively curved group. There exists a finite  presentation $G=\langle X|R \rangle$, called  Dehn's presentation, with the following property. If $w$ is a non-trivial freely reduced word in $X^*$ such that $w=1_G$ then there exists a relator $r \in R$ and an initial subword $v$ of $r$ with $|v| > \frac{1}{2}|r|$ such that $v$ is a subword of $w$.
\end{theorem}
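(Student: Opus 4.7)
My plan is to construct the finite relator set $R$ explicitly from a local-to-global geodesicity principle for hyperbolic Cayley graphs, and then verify both that $\langle X\mid R\rangle$ presents $G$ and that the Dehn shortening property holds.

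The main technical step, which I would carry out first, is the following consequence of $\delta$-hyperbolicity: there exists a constant $k=k(\delta,|X|)$ such that if $w$ is a freely reduced non-empty word in $X^*$ with $w=1_G$, then some subword $v$ of $w$ with $|v|\le k$ fails to be a geodesic, i.e.\ equals in $G$ some word $u$ with $|u|<|v|$. The proof uses the $\delta$-thin triangle property applied to the loop labeled by $w$ in $Cayley(G)$: one locates two vertices of the loop that are far apart along $w$ but close in $Cayley(G)$, which yields a non-geodesic subword; iterating on the resulting shorter sub-loop eventually produces a non-geodesic subword whose length is bounded in terms of $\delta$ and $|X|$. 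This local-to-global phenomenon is where $\delta$-hyperbolicity is really used and is the main obstacle of the proof.

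With this lemma in hand, for each non-geodesic $v\in X^*$ with $|v|\le k$ I would fix a geodesic word $u(v)$ with $u(v)=v$ in $G$ and $|u(v)|<|v|$, and define
$$R \;=\; \bigl\{\, v\, u(v)^{-1} \;:\; v\in X^*,\ |v|\le k,\ v\text{ is not a geodesic word}\,\bigr\}.$$
Since $X^*$ is finite and only finitely many words of length at most $k$ appear, the set $R$ is finite. Each element of $R$ evaluates to $1_G$, giving a surjection $\langle X\mid R\rangle\twoheadrightarrow G$, and injectivity will follow by induction on word length from the Dehn property established below: any word equal to $1_G$ in $G$ can be strictly shortened by a single $R$-move followed by free reductions, hence becomes trivial in $\langle X\mid R\rangle$ after finitely many steps.

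Finally, the Dehn property is read off directly. Given $w$ freely reduced and non-trivial with $w=1_G$, the lemma produces a subword $v$ of $w$ with $|v|\le k$ and a shorter word $u(v)$ as above; the relator $r := v\,u(v)^{-1}\in R$ has $v$ as an initial subword, and since $|u(v)|<|v|$ we have $|v|>|r|-|v|$, hence $|v|>\frac{1}{2}|r|$, as required.
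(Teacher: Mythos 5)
The paper does not actually prove this theorem: it is quoted as a known result, attributed to Greendlinger and Gromov, with the reader pointed to Bridson--Haefliger p.~450 for an exposition. Your argument is, in outline, exactly the proof given in that cited source: use the local-to-global principle for $\delta$-hyperbolic spaces (a $k$-local geodesic with $k>8\delta$ is a global quasi-geodesic, hence cannot close up into a nontrivial loop) to show that every nontrivial freely reduced word representing $1_G$ contains a non-geodesic subword of length at most $k$, then take as relators all words $v\,u(v)^{-1}$ with $v$ non-geodesic of length at most $k$ and $u(v)$ a shorter geodesic representative. The key steps are all present and correct: finiteness of $R$ follows from finiteness of $X$ and the bound $k$; the Dehn property follows from $|u(v)|<|v|$; and the fact that $\langle X\mid R\rangle$ presents $G$ follows by the length-reduction induction you describe. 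Two small points deserve care in a full write-up: first, the quasi-geodesic inequality $d(p(0),p(n))\ge n/\lambda-2\delta$ only rules out loops of length greater than $2\delta\lambda$, so very short loops (where $w$ itself has length at most $k$) must be handled separately by observing that $w$ is then its own non-geodesic subword; second, the word $v\,u(v)^{-1}$ need not be freely reduced at the junction, a cosmetic issue that does not affect the statement since $v$ remains an initial subword of the relator as written. With those caveats your proposal is a sound and essentially complete proof of a statement the paper leaves to the literature.
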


A good exposition of Dehn's presentation was given by Bridson and Haefliger in 1999, \cite{B-H}, p.450.

The solution of the word problem for a negatively curved group $G$ is given by the following procedure, called Dehn's algorithm.
A good exposition of Dehn's algorithm can be found in \cite{B-H}, p.449 and in \cite{L-S}, p.246.

Start by choosing Dehn's presentation $\langle X|R \rangle$ for $G$. Let $w$ be a non-trivial freely reduced word in $X^*$. 

\emph{For any relator $r \in R$ check if there exists an initial subword $v$ of $r$ with $|v| > \frac{1}{2}|r|$ such that $v$ is a subword of $w$. If no, then $w \neq 1_G$. If yes, let
$r \equiv vu$, replace the word $v$ in $w$ by the word $u^{-1}$ and freely reduce. Denote the resulting freely reduced word by $w_1$. As the words $v$ and $u^{-1}$ represent the same element in $G$, it follows that the words $w$ and $w_1$ represent the same element in $G$. As $|v| > |u|$, it follows that
$|w| > |w_1$. Repeat the procedure with the word $w_1$.}
 
Dehn's algorithm terminates in at most $|w|$ steps. If it terminates with the trivial word, then $w=1_G$.

\bigskip

It was shown by Rips in 1982, \cite{Ri}, that the membership problem is undecidable for arbitrary subgroups of  negatively curved groups. A good exposition of that result can be found in \cite{B-M-S}. However,
the membership problem for quasiconvex subgroups of negatively curved groups is decidable which was shown, for example, by the author in 1995, \cite{Gi1}, and independently, by Farb in 1994, \cite{Fa}, and by I. Kapovich in 1995, \cite{Ka}. The author in 2016, \cite{Gi3}, and independently, Kharlampovich, Miasnikov, and Weil in 2017, \cite{K-M-W}, gave new proofs of that fact.

We present a short solution of the membership problem for quasiconvex subgroups of finitely generated groups with decidable word problem. Our solution utilizes the concept of a weakly Nielsen generating set of a subgroup, introduced by the author in 1995, \cite{Gi1}. 

Nielsen generating sets are an important tool in the study of free groups. They originated in work of Nielsen in 1921, \cite{Ni}.
The main characteristic of Nielsen generating sets is a small amount of cancellations between all the members. A good description of Nielsen generating sets and their applications in free groups was given by Magnus, Karrass, and Solitar in 1966, \cite{M-K-S}, p.128.

\begin{definition}
Let $H$ be a subgroup of a group $G=\langle X|R \rangle$. We say that a finite generating set $S=\{s \equiv l_in_ir_i | 1 \leq i \leq m, n_i \neq 1 \}$
of $H$ is Nielsen if the cancellations in any freely reduced product of elements of $S$ do not affect the words $n_i$.
\end{definition} 

A theorem of Nielsen proven in 1921, \cite{Ni}, states that any finitely generated subgroup of a free group has a Nielsen generating set.

The existence of generating sets with similar strong noncancellation properties was a topic of extensive research which showed that Nielsen generating sets are very rare. See, for example, a paper of Collins and Zieschang from 1988, \cite{C-Z}. However a modified version of the Nielsen generating set turned out to be very useful. 

\begin{definition}
Let $H$ be a subgroup of the group $G=\langle X|R \rangle$. 
We say that a finite generating set $S$ of $H$ is weakly Nielsen
if for any $h \in H$ and for any shortest word $w$ in $X^*$ representing $h$ in $G$ there exist finitely many elements $s_i \in S$ and
decompositions $s_i \equiv l_in_ir_i$, (which depends on $w$,) with $n_i \neq 1$ 
such that $h=s_1 \cdots s_m$ and $w \equiv l_1n_1n_2 \cdots n_m r_m$.
\end{definition}

We need an additional definition.

\begin{definition} 
The label of the path  
$p=(g,x_1)(gx_1,x_2)  \cdots (gx_1x_2 \cdots x_{n-1},x_n)$ 
in $Cayley(G)$ is the word $Lab (p) \equiv x_1 \cdots x_n $. 

The inverse of a path $p$ is denoted $\bar{p}$.
\end{definition}

The author proved in 1996 that quasiconvex subgroups of finitely generated groups have weakly Nielsen generating sets, \cite{Gi1}.
Below is a new proof of that fact.

\begin{lemma}
Let $H$ be a $K$-quasiconvex subgroup of a  group $G$, generated by a finite set $X$.  
Then $H$ has a weakly Nielsen generating set consisting of elements of $G$ of length at most $2K+1$.
\end{lemma}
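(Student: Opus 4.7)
The plan is to take $S$ to be the set of all elements of $H$ whose $G$-word length (with respect to $X$) is at most $2K+1$; this is finite because $X$ is finite, and the construction below will simultaneously show that $S$ generates $H$. The guiding idea is to ``track'' the geodesic realizing the shortest word $w$ edge by edge, hopping at each vertex at most $K$ steps over to a nearest element of $H$, and reading off the resulting increments as elements of $S$.

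More concretely, given $h \in H$ and a shortest word $w \equiv x_1 x_2 \cdots x_n$ representing $h$ in $G$, I would let $v_i = x_1 \cdots x_i$ denote the consecutive vertices of the geodesic from $1_G$ to $h$ in $Cayley(G)$, so that $v_0 = 1_G$ and $v_n = h$. Since $H$ is $K$-quasiconvex and the endpoints lie in $H$, each $v_i$ is within distance $K$ of $H$; for $1 \leq i \leq n-1$ I would choose a word $p_i \in X^*$ with $|p_i| \leq K$ such that $h_i := v_i p_i \in H$, and set $p_0$ and $p_n$ to be the empty word so that $h_0 = 1_G$ and $h_n = h$.

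Then I would define $s_i := h_{i-1}^{-1} h_i \in H$ for $1 \leq i \leq n$. As elements of $G$ we have $s_i = p_{i-1}^{-1} x_i p_i$, and hence by the triangle inequality $|s_i|_G \leq |p_{i-1}| + 1 + |p_i| \leq 2K+1$, so $s_i \in S$. I would declare the decomposition $l_i \equiv p_{i-1}^{-1}$, $n_i \equiv x_i$, $r_i \equiv p_i$, for which $n_i \neq 1$. The telescoping product $s_1 s_2 \cdots s_n$ collapses to $h_0^{-1} h_n = h$, and the word-level concatenation $l_1 n_1 n_2 \cdots n_n r_n$ is literally $p_0^{-1} x_1 x_2 \cdots x_n p_n \equiv w$ because $p_0$ and $p_n$ are empty. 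Both requirements of being weakly Nielsen follow.

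The single delicate point, and hence the main place requiring care, is the ``word versus group element'' bookkeeping: the chosen word $p_{i-1}^{-1} x_i p_i$ is a word in $X^*$ representing $s_i$ in $G$, but it is in general not a geodesic for $s_i$. This is exactly what the clause ``the decomposition depends on $w$'' in the definition of a weakly Nielsen generating set accommodates; once that is accepted, the rest of the argument is a mechanical telescoping computation.
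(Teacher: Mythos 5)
Your proposal is correct and is essentially the paper's own argument: the paper likewise walks along the geodesic for $w$, uses $K$-quasiconvexity to drop a path $t_i$ of length at most $K$ from each vertex $v_i$ to $H$ (your $p_i$), and forms the telescoping generators $s_i \equiv Lab(\overline{t_{i-1}})\,Lab(e_i)\,Lab(t_i)$ with the same decomposition $l_i n_i r_i$. The only cosmetic difference is your choice of $S$ as all elements of $H$ of length at most $2K+1$ rather than the union of the constructed $s_i$ over all $h$; both are finite and serve the same purpose.
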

\begin{proof}
Consider $h \in H$. Let $w$ be a shortest word in $X^*$ representing $h$ in $G$, and let $\gamma$ be a geodesic in $Cayley(G)$ beginning at $1_G$
with $Lab(\gamma) \equiv w$. Let $v_0=1_G, v_1, \cdots, v_n=h$ be the vertices of $\gamma$ listed in order, and let $e_1, e_2, \cdots, e_n$ be the edges of $\gamma$ listed in order, so the length of $\gamma$ is $n$. 
As $H$ is $K$-quasiconvex in $G$, for any vertex $v_i$ of $\gamma$ there exists a path $t_i$ not longer than $K$, which joins $v_i$ to an element of $h$.
Note that some $t_i$ might be empty. Then 

$h = (Lab(\overline{t_0}) Lab (e_1) Lab(t_1)) (Lab(\overline{t_1}) Lab(e_2) Lab(t_2)) \cdots (Lab(\overline{t_n}) Lab(e_n) Lab(t_{n+1}))$. 

Let $l_i \equiv Lab(\overline{t_{i-1}}), n_i \equiv Lab(e_i), r_i \equiv Lab(t_i)$, and $s_i \equiv l_in_ir_i$.

Let $S$ be the union of all $s_i$, constructed for all $h \in H$. By construction, each $s_i$ is no longer than $2K+1$, hence as the set $X$ is finite,
the set $S$ is also finite. By construction, $S$ is a weakly Nielsen generating set of $H$.
\end{proof}

\begin{theorem}
Let $G$ be a group generated by a finite set $X$ and let $H$ be a $K$-quasiconvex subgroup of $G$. If $G$ has a decidable word problem 
then the membership problem for $H$ in $G$ is decidable.
\end{theorem}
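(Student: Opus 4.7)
My plan is to use the weakly Nielsen generating set produced by the preceding lemma to reduce the membership problem for $H$ to a finite enumerative search, and to complete the algorithm by strong induction on $G$-geodesic length.

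Given an input word $w \in X^{*}$, I first produce a $G$-geodesic word $w'$ representing $[w]$, using the decidable word problem in $G$: enumerate words over $X^{*}$ of length at most $|w|$ in increasing order and test each candidate $u$ for $G$-equality with $w$ by checking whether $w u^{-1} = 1_G$ via the word problem; keep the first successful $u$. Write $w' = x_1 x_2 \cdots x_n$. If $[w] \in H$, the lemma applied to $[w]$ with geodesic $w'$ produces words $t_0 = \epsilon, t_1, \ldots, t_{n-1}, t_n = \epsilon$ in $X^{*}$ with each $|t_i| \leq K$ such that, for every $i$, the word $s_i := \bar{t}_{i-1}\, x_i\, t_i$ (of length at most $2K+1$) represents an element of $H$. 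Conversely, if such $t_i$'s exist with every $s_i$ in $H$, the telescoping identity $s_1 s_2 \cdots s_n = [w']$ in $G$ forces $[w] \in H$. The algorithm therefore enumerates the finitely many tuples $(t_1, \ldots, t_{n-1})$ and for each one recursively tests whether every $s_i$ lies in $H$. Since $|s_i| \leq 2K+1$, whenever $n > 2K+1$ the recursion is on strictly shorter words, and strong induction on $n$ reduces everything to base-case queries where $n \leq 2K+1$.

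The main obstacle is the base case: deciding membership for the finitely many elements of geodesic length at most $2K+1$. I plan to handle these simultaneously by a fixed-point computation over the finite ball $B := B_{G}(1_G, 2K+1)$. Initialize $V_0 \subseteq B$ to contain $1_G$ together with the $G$-reduced forms of the given generators of $H$ and their inverses that happen to have $G$-length at most $2K+1$. Iteratively set
\[
V_{k+1} := V_k \cup \{\, u \in B : u \text{ admits a decomposition of the above form with every piece in } V_k\,\};
\]
the sequence stabilizes at some $V_{\star} \subseteq H \cap B$ because $B$ is finite, and this inclusion is immediate. The technical heart of the proof is the reverse inclusion $H \cap B \subseteq V_{\star}$, which I would establish by applying the lemma to each element of $H \cap B$ and inducting on an expression of that element in the given finite generating set of $H$, using that the weakly Nielsen generating set makes the inclusion $H \hookrightarrow G$ a quasi-isometric embedding with explicit constants so that the inductive descent to $V_0$ terminates. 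Once $V_{\star}$ is computed, the recursive procedure of the previous paragraph decides membership for every $w \in X^{*}$.
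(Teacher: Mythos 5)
Your reduction to the base case is sound in outline and rests on the same Lemma 1 as the paper, but the paper uses the lemma far more directly: given the input $g$ with geodesic representative of length $n$, the weakly Nielsen property says $g\in H$ if and only if $g$ equals a product of at most $n$ elements of the finite set $S$ (each a word of length at most $2K+1$), so one enumerates all such products and compares each with $g$ via the word problem. No recursion and no separate base case are needed. Your recursive reformulation is not wrong, but it pushes the entire difficulty into the base case, and that is where your argument breaks.

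The gap is the reverse inclusion $H\cap B\subseteq V_{\star}$, which is not merely ``the technical heart'' but is actually false for the iteration you describe. Your $V_{0}$ contains only $1_{G}$ and those given generators of $H$ whose geodesic length is at most $2K+1$; if every given generator is longer than that, then $V_{0}=\{1_{G}\}$, and the only element of $B$ admitting a decomposition with all pieces in $V_{0}$ is $1_{G}$ itself, so the iteration stabilizes at $V_{\star}=\{1_{G}\}$ immediately. But $H\cap B$ can be larger: take $G$ free on $\{a,b\}$ and $H=\langle a^{100},a^{101}\rangle=\langle a\rangle$, which is $0$-quasiconvex, so $B=B_{G}(1_{G},1)$ and $H\cap B=\{1_{G},a,a^{-1}\}$ while $V_{\star}=\{1_{G}\}$. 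The underlying obstruction is that Lemma 1 decomposes an element of $H\cap B$ into pieces that are again arbitrary elements of $H\cap B$, with no decrease in any complexity measure, so there is no ``inductive descent to $V_{0}$''; and the quasi-isometric-embedding constants you invoke are only known to exist, not computable from the given data, so they cannot drive a terminating search either. The repair is to note that the theorem asserts only the existence of a deciding algorithm: the finite set $H\cap B$ (equivalently, the weakly Nielsen generating set $S$ of Lemma 1) exists and may be built into the algorithm as a finite table, which is exactly what the paper does when it takes $S$ as given. If you insist on computing the base case uniformly from the input, you need a genuinely different argument, and in this generality it is not clear that one exists.
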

\begin{proof}
Let $g$ be an element of $G$. As $G$ has a decidable word problem, we can find a shortest word $w$ in $X^*$ representing $g$ in $G$. Let the length
of $w$ be $n$. Lemma 1 states that $H$ has a weakly Nielsen generating set $S$.
If $g$ belongs to $H$ then, by definition of a weakly Nielsen generating set, $g$ can be written as a product of at most $n$ elements of $S$.
As $S$ is finite, we can generate all the products of at most $n$ elements of $S$. As $G$ has a decidable word problem, we can check if any of these products is equal to $g$.
\end{proof}

\section{Acknowledgment}

The author would like to thank Peter Scott for helpful conversations.

\end{document}